\documentclass[11pt]{amsart}
\usepackage{amssymb,latexsym}
\usepackage{enumerate}
\usepackage{hyperref}
\usepackage{fullpage}
\usepackage{mathabx}

\makeatletter \@namedef{subjclassname@2010}{%
  \textup{2010} Mathematics Subject Classification}
\makeatother

\newcounter{thm} \numberwithin{thm}{section}
\newtheorem{Theorem}[thm]{Theorem}

\newtheorem{Corollary}[thm]{Corollary}
\newtheorem*{Conjecture}{Conjecture}

\newtheorem*{UnnumberedTheorem}{Theorem}



\newcommand{\CC}[0]{\mathbb C}

\newcommand{\FF}[0]{\mathbb F}




\newcommand{\eps}[0]{\varepsilon}

\renewcommand{\mod}[1]{\ (\text{mod }#1)}

\newcommand{\lr}[1]{\left(#1\right)}


\begin{document}


\baselineskip=17pt



\title{Refined Estimates Concerning Sumsets Contained in the Roots of Unity}

\author[Brandon Hanson]{Brandon Hanson} \address{Department of Mathematics, University of Georgia, Athens, GA 30602.}
\author[Giorgis Petridis]{Giorgis Petridis} \address{Department of Mathematics, University of Georgia, Athens, GA 30602.}
\date{}
\maketitle

\begin{abstract}
We prove that the clique number of the Paley graph is at most $\sqrt{p/2} + 1$, and that any supposed additive decompositions of the set of quadratic residues can only come from co-Sidon sets.
\end{abstract}

\let\thefootnote\relax\footnotetext{The authors are supported by the NSF Award 1723016 and gratefully acknowledge the support from the RTG in Algebraic Geometry, Algebra, and Number Theory at the University of Georgia, and from the NSF RTG grant DMS-1344994. The second author is supported by the NSF Award 1723016. 

Mathematics Subject Classification (2010) code: 11B30.}

\section{Introduction}
Let $Z_d$ denote the $d$'th roots of unity belonging to a field $\FF$, which is to say, the solutions to $z^d=1$. Then $Z_d$ forms a multiplicative subgroup of the units of $\FF$, and number theoretic intuition leads one to expect that $Z_d$ not possess too much additive structure. There are a number of ways to interpret such a statement, and in this article we will be concerned with sumset decompositions
\[Z_d=A+B\] for non-singleton subsets $A$ and $B$ of $\FF$, where
\[A+B=\{a+b:a\in A,\ b\in B\}.\] 
Since sumsets have some additive structure, this type of decomposition seems unlikely outside of very particular situations. On the other hand, since we impose no further constraints on $A$ and $B$, their sumset is generally not structured enough to make strong additive statement, and problems involving sumset decompositions are often very difficult to get a handle on for this reason.

One heuristic for there to be no sumset decomposition of $Z_d$ is that addition is linear, and the property of being a multiplicative subgroup feels very much non-linear. When $\FF=\CC$, this is solidified by the fact that the roots of unity are cocircular. We dispose of this case here.

\begin{Theorem}
If $d>4$ then $Z_d=A+B$ has no non-trivial solutions over $\CC$.
\end{Theorem}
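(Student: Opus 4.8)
The plan is to use a single structural fact: the complex $d$-th roots of unity are \emph{cocircular} --- every element of $Z_d$ lies on the unit circle $\{\,|z|=1\,\}$. So if $Z_d=A+B$ with $|A|\ge 2$ and $|B|\ge 2$ (the meaning of a non-trivial solution), then every sum $a+b$ must lie on this circle, and I would show this rigidity is impossible once $d>4$.

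\textbf{Step 1: $A$ and $B$ each lie on a line.} Fix distinct $a_1,a_2\in A$. For every $b\in B$ both $a_1+b$ and $a_2+b$ lie in $Z_d$, so $|a_1+b|=|a_2+b|=1$; rewriting this as $|(-b)-a_1|=|(-b)-a_2|$, the point $-b$ is equidistant from $a_1$ and $a_2$, hence lies on the perpendicular bisector $\ell$ of the segment $[a_1,a_2]$, which is a genuine line because $a_1\ne a_2$. Thus $-B\subseteq\ell$, so $B$ lies on a line; running the same argument with two distinct elements of $B$ shows $A$ lies on a line as well.

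\textbf{Step 2: $|A|\le 2$ and $|B|\le 2$.} Fix any $b_0\in B$. Then $A+b_0\subseteq A+B=Z_d$ lies on the unit circle, while $A+b_0$ is a translate of a subset of a line and hence lies on a line. A line and a circle meet in at most two points, so $|A|=|A+b_0|\le 2$, and symmetrically $|B|\le 2$. Therefore $d=|Z_d|=|A+B|\le|A|\,|B|\le 4$, contradicting $d>4$.

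I do not expect a genuine analytic obstacle here; the whole content is the observation in Step 1 that cocircularity of the target forces each summand onto a line (via perpendicular bisectors), followed by the observation that translating a collinear set keeps it on a circle. The only points needing care are that \qu{non-trivial} must mean $|A|,|B|\ge 2$, so that Step 1 produces honest lines rather than degenerate loci, and that $|Z_d|=d$ exactly, so that the cardinality count closing Step 2 is tight. One can check the bound is attained at $d=4$, e.g.\ $Z_4=\{\tfrac12(1+i),-\tfrac12(1+i)\}+\{\tfrac12(1-i),-\tfrac12(1-i)\}$, so the threshold $d>4$ cannot be lowered.
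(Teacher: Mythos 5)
Your proof is correct and rests on the same idea as the paper's: cocircularity of $Z_d$ forces $|A|,|B|\le 2$ via elementary Euclidean geometry. The paper packages the key step as the observation that $a_1-a_2$ has at least $|B|$ representations as a difference of two points of $S^1$, while a nonzero complex number has at most two such representations; your perpendicular-bisector/line-meets-circle detour is just a rephrasing of that same two-circle intersection fact.
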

\begin{proof}
Assume $2\leq |A|\leq |B|$. Let $a_1,a_2\in A$ be distinct. Then, since $Z_d\subset S^1$ (the unit circle in the complex plane), we have that $a_1+B$ and $a_2+B$ are subsets of $S^1$. But
\[a_1-a_2=(a_1+b)-(a_2+b)\]
has at least $|B|$ representations as a difference in $S^1-S^1$. On the other hand, any non-zero complex number is a difference of two points on the unit circle in at most two ways. Thus $|B|=2$ and $d=4$.
\end{proof}
We remark that the condition $d>4$ is crucial here. If $d<4$, $Z_d$ is too small to admit a non-trivial sumset decomposition, and when $d=4$
\[\left\{\frac{1+i}{2},-\frac{1+i}{2}\right\}+\left\{\frac{1-i}{2},-\frac{1-i}{2}\right\}=Z_4\] yields a non-trivial sumset decomposition of the fourth roots. 

Turning to finite fields, the full group of units is now contained in a line, so the above argument is lost. Indeed, if $p\geq 5$ then 
\[\left\{0,\frac{p-1}{2}\right\}+\left\{1,\ldots,\frac{p-1}{2}\right\}=\{1,\ldots,p-1\}=Z_{p-1}\] 
is a non-trivial sumset decomposition of the full group of units. However, we expect that no such decomposition holds when $4<d<p-1$. In this setting, sumsets contained in $Z_d$ and, more broadly, the additive distribution of $Z_d$ has a rich history. Of particular interest is the case $d=(p-1)/2$ so that $Z_d$ consists of the quadratic residues. Traditionally, attacks on these problems make use of cancellation in character sums such as
\[S_\chi(A,B)=\sum_{a\in A}\sum_{b\in B}\chi(a+b)\]
where \[\chi:\FF_p^\times\to\CC\]
is a multiplicative homomorphism of $\FF_p^\times$, often extended to $\FF_p$ via $\chi(0)=0$. 
Basic estimates for $S_\chi(A,B)$ date back at least as early as Vinogradov (e.g. Exercise 8 in Chapter 5 of \cite{Vinogradov}).
\begin{UnnumberedTheorem}[Vinogradov]
For any subsets $A$ and $B$ of $\FF_p$ and any non-trivial multiplicative character $\chi$, we have the estimate
\[|S_\chi(A,B)|\leq \sqrt{p|A||B|}.\]
\end{UnnumberedTheorem}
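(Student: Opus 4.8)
The plan is to reduce the double sum to a single variable via Cauchy--Schwarz and then exploit orthogonality of the nontrivial character $\chi$.

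First I would fix $A$, $B$, and the nontrivial character $\chi$, adopt the convention $\chi(0)=0$, and write $S_\chi(A,B)=\sum_{a\in A}\psi(a)$ with $\psi(a)=\sum_{b\in B}\chi(a+b)$. Cauchy--Schwarz then gives
$|S_\chi(A,B)|^2\leq |A|\sum_{a\in A}|\psi(a)|^2\leq |A|\sum_{a\in\FF_p}|\psi(a)|^2$,
where the last inequality simply drops the restriction $a\in A$ and uses that every term is nonnegative.

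Next I would expand $\sum_{a\in\FF_p}|\psi(a)|^2=\sum_{b_1,b_2\in B}\sum_{a\in\FF_p}\chi(a+b_1)\overline{\chi(a+b_2)}$ and evaluate the inner sum for each pair $(b_1,b_2)$. The diagonal terms $b_1=b_2$ contribute $\sum_{a\in\FF_p}|\chi(a+b_1)|^2=p-1$ apiece, since $|\chi(x)|=1$ for $x\neq 0$ and $\chi(0)=0$. For $b_1\neq b_2$, translating $a$ and rewriting the summand for $a\neq 0$ as $\chi\lr{1+\tfrac{b_1-b_2}{a}}$ (the $a=0$ term vanishing because $\chi(0)=0$) shows the argument runs over $\FF_p\setminus\{1\}$ as $a$ runs over $\fpx$, so the inner sum equals $\sum_{t\in\FF_p}\chi(t)-\chi(1)=-1$ by orthogonality. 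Hence
$\sum_{a\in\FF_p}|\psi(a)|^2=|B|(p-1)-|B|(|B|-1)=|B|(p-|B|)\leq p|B|$,
and combining with the Cauchy--Schwarz bound gives $|S_\chi(A,B)|^2\leq p|A||B|$, as claimed.

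The one place where nontriviality of $\chi$ is used is the evaluation of the off-diagonal sum, and this is also the only mildly delicate point: one must verify that $a\mapsto 1+(b_1-b_2)/a$ is a bijection from $\fpx$ onto $\FF_p\setminus\{1\}$ and keep careful track of the terms killed by the convention $\chi(0)=0$. Everything else is routine bookkeeping. I would also remark that the computation actually yields the slightly sharper bound $|S_\chi(A,B)|\leq\sqrt{|A||B|(p-|B|)}$.
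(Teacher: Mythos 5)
Your proof is correct: the Cauchy--Schwarz reduction to $\sum_{a\in\FF_p}|\psi(a)|^2$, the evaluation of the diagonal terms as $p-1$ and the off-diagonal inner sums as $-1$ via the substitution $a\mapsto 1+(b_1-b_2)/a$ and orthogonality of the nontrivial $\chi$, all check out, as does your sharper bound $\sqrt{|A||B|(p-|B|)}$. The paper does not supply a proof of this statement (it only cites Vinogradov's textbook), but what you have written is precisely the classical argument that source intends.
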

It is at this point worth observing that the sum $S_\chi(A,B)$ can never be larger than $|A||B|$ in modulus, as it is a sum of numbers $\chi(x)$ of modulus at most 1. Any improvement on the estimate $|A||B|$ is called non-trivial, and so we see that Vinogradov's theorem provides a non-trivial estimate when
\[|A||B|>p.\] An interesting feature of Vinogradov's estimate is that while it is totally elementary to prove, it is still the best known estimate for $S_\chi(A,B)$. However, a much stronger range of non-trivial estimates is conjectured (the conjecture is folklore and may trace back to Vinogradov).
\begin{Conjecture}
For any $\eps>0$, once $p$ is sufficiently large in terms of $\eps$, then
\[|S_\chi(A,B)|<|A||B|\]
for all sets $A,B\subseteq \FF_p$ subject only to the condition $|A|,|B|>p^\eps$.
\end{Conjecture}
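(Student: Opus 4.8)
The statement is a natural generalisation to arbitrary non-trivial $\chi$ of the \emph{Paley graph conjecture}, and a complete proof seems well beyond the reach of current methods; what follows is the natural line of attack and the point at which it stalls. First one trims the range of $|A|,|B|$ that must be considered: Vinogradov's bound $|S_\chi(A,B)|\leq\sqrt{p|A||B|}$ already gives $|S_\chi(A,B)|<|A||B|$ whenever $|A||B|>p$, so by symmetry one may assume $p^\eps\leq|A|\leq|B|\leq p^{1/2}$, and this subcritical regime is the whole content. It is also worth isolating the extremal case, since $|S_\chi(A,B)|=|A||B|$ forces $\chi$ to be constant on $A+B$, i.e.\ it forces $A+B$ to lie inside a single coset $c\,\Ker\chi$ with $0\notin A+B$. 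Thus the weak form of the conjecture asserts that a sumset $A+B$ with $\min(|A|,|B|)>p^\eps$ cannot be trapped in a multiplicative coset of index strictly between $2$ and $p-1$, which is exactly the kind of phenomenon the decomposition theorems of this paper are designed to exclude, now at polynomially small density.

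There are two standard routes, and I would try to combine them. The first is Burgess-style amplification: apply H\"older in the $a$-variable to turn $S_\chi(A,B)^{2k}$ into a sum over $a\in A$ of $\chi$ evaluated at a rational function of degree at most $2k$ in $a$, complete the sum over $a$ to all of $\FF_p$, and invoke the Weil bound. This works perfectly when $A$ is an interval, because translates and small dilates of an interval are again essentially intervals and the completion step costs only a P\'olya--Vinogradov factor; for a general set neither of these is true, which is precisely why Burgess's method has never been pushed below $\sqrt p$ for unstructured sets. The second route is additive-combinatorial: from near-maximality of $|S_\chi(A,B)|$ one extracts a large subset $A'\subseteq A$ and a coset $c\,\Ker\chi$ with $A'+B\subseteq c\,\Ker\chi$ up to an error set of relative density $o(1)$, so that many additive translates of $B$ lie in one multiplicative coset; one then feeds this into a sum-product estimate in $\FF_p$ --- using that $\Ker\chi$ is multiplicatively closed --- together with a Balog--Szemer\'edi--Gowers argument to force $A'$ and $B$ to be highly additively structured, and finally contradicts this with a Weil-type or Fourier bound available for structured sets.

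The main obstacle is the $\sqrt p$ barrier, and both routes collide with it. The amplification route has no substitute for P\'olya--Vinogradov completion over an arbitrary set, and dilation --- the move that makes Burgess's induction close --- destroys a general $A$. The additive route does produce usable structure, but only when the incidence and energy counts it relies on comfortably exceed $p$; once $|A|$ and $|B|$ are simultaneously a fixed power of $p$ below $p^{1/2}$, the Balog--Szemer\'edi--Gowers step no longer forces enough additive structure on $B$ to reach a contradiction, and any amplification built on top of Vinogradov's bound buys only a fixed power saving rather than an arbitrarily small exponent. Realistically the achievable target of this program is to replace $p^{1/2}$ by $p^{1/2-\delta_0}$ for a small absolute $\delta_0>0$ --- and, in the spirit of this paper, to sharpen the leading constant in the corresponding $\sqrt p$-sized bound for cliques in the Paley graph; reaching the conjecture all the way down to $|A|,|B|>p^\eps$ would require either a genuinely new estimate for $S_\chi(A,B)$ in the range $|A||B|<p$, of which none is known, or a structure theorem for sumsets trapped in multiplicative cosets that stays effective at polynomially small density.
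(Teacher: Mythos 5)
This statement is labelled a Conjecture in the paper and is not proved there --- it is (a generalisation of) the Paley graph conjecture and remains wide open --- so your decision not to claim a proof is the correct one, and your account of why the two standard routes (Burgess-style amplification and additive-combinatorial/sum--product arguments) both stall at the $\sqrt p$ barrier is accurate and consistent with the paper's own remark that Vinogradov's elementary bound is still the best known for unstructured sets. One small correction to your preliminary reduction: Vinogradov's estimate disposes of the regime $|A||B|>p$, which leaves $|A||B|\leq p$, not $|A|\leq |B|\leq p^{1/2}$; for instance $|A|=p^{\eps}$, $|B|=p^{1-2\eps}$ survives the trivial reduction even though $|B|$ far exceeds $p^{1/2}$, so the surviving range is genuinely two-dimensional even if the hardest case is the one where both sets are small.
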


In the case that $A$ or $B$ is highly structured, Vinogradov's estimate can be improved. Theorems of this sort can be found in \cite{Burgess1}, \cite{Burgess2}, \cite{FI}, \cite{Chang}, \cite{ShkredovShparlinski}, \cite{ShkredovVolostnov} and \cite{Volostnov}. For completely general sets, \cite{BMR} provides a slight improvement for certain primes, while one also has a very modest improvement if we replace a two-fold convolution with a three-fold convolution, which means estimating
\[S_\chi(A,B,C)=\sum_{a\in A}\sum_{b\in B}\sum_{c\in C}\chi(a+b+c).\]
This was carried out by the first author in \cite{Hanson}. 

A particular implication of Vinogradov's estimate is that if $A$ and $B$ are subsets of $\FF_p$ with the property that $A+B\subseteq Z_{d}$ for some $d$ properly dividing $p-1$, then we must have $|A||B|\leq p$.

The main theorem of this paper is a refinement of these estimates and relies on Stepanov's method of auxiliary polynomials. See \cite{Heath-BrownKonyagin} for another application of Stepanov's method to an additive problem of roots of unity.

\begin{Theorem}\label{Main1}
Let $p$ be a prime and suppose $A,B\subseteq\FF_p$ satisfy $A+B\subseteq Z_d \cup \{0\}$ for some $d$ properly dividing $p-1$. Then 
\[|A||B|\leq d + |B \cap (-A)|.\]
\end{Theorem}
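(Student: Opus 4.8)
The plan is to construct an auxiliary polynomial that vanishes to high order at every point of $-A$ (shifted appropriately), exploiting the algebraic constraint that $a+b$ is a $d$'th root of unity for every $a \in A$, $b \in B$. First I would set up coordinates: for each fixed $b \in B$, the set $A + b$ lies in $Z_d \cup \{0\}$, meaning every element of $A+b$ satisfies $x^d(x \cdot \text{something}) \dots$ — more precisely, each $a + b$ is a root of $X^{d+1} - X = X(X^d - 1)$. So the polynomial $X^{d+1} - X$ vanishes on $A + b$ for all $b$; equivalently, for each $b$, the shifted polynomial $(X+b)^{d+1} - (X+b)$ vanishes on $A$. The goal is to build $F(X)$ of controlled degree that, because of these many vanishing conditions, is forced to have at least $|A|\cdot|B|$ roots counted with multiplicity (one high-order root at each point of $-A$, or at each $a$, with multiplicity related to $|B|$), while its degree is bounded by roughly $d$ plus a correction.

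The key steps, in order: (1) Fix a reference element $b_0 \in B$ and work with the translate, so WLOG $0 \in B$; then $A \subseteq Z_d \cup \{0\}$, and for each $b \in B$ the polynomial $(X+b)^d - 1$ (or its product with $X+b$) vanishes on $A \setminus \{-b\}$. (2) Form an auxiliary polynomial of the shape $F(X) = \sum_{j} c_j X^{e_j} \prod_{b \in B}\big((X+b)^d - 1\big)^{?}$ — actually more likely $F(X) = G(X)\cdot \prod_{b\in B}\big((X+b)^{d}-1\big)$ won't work directly since that has degree $d|B|$; instead I would aim for a polynomial that is \emph{divisible} by $\prod_{b \in B}\big((X+b)^d-1\big)$ as a way of forcing vanishing, balanced against a Wronskian/derivative argument that forces high multiplicity. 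The cleaner route: show that any polynomial vanishing on $A$ to order $\geq |B|$ at each point, and lying in the span of a low-dimensional space of polynomials reduced mod $X^p - X$, must be zero unless its degree exceeds $|A|\cdot|B|$; then exhibit a nonzero such polynomial of degree $\leq d + (\text{correction})$ by a dimension count (the space of candidate numerators has dimension exceeding the number of linear constraints). (3) The correction term $|B \cap (-A)|$ enters because for $b \in B \cap (-A)$, the point $-b \in A$ is a zero of $X+b$ itself, so the factor $(X+b)^d - 1$ does \emph{not} vanish there — one loses one unit of multiplicity at exactly those points, which is precisely how $|B \cap (-A)|$ appears on the right-hand side. (4) Finally, compare: nonzero polynomial, degree $\lesssim d$, with $\geq |A|(|B| - \text{loss})$ roots forces $|A||B| \leq d + |B \cap (-A)|$ after bookkeeping.

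I expect the main obstacle to be the construction and nonvanishing of the auxiliary polynomial: one must choose the right ansatz so that (a) the degree stays at $d$ up to the additive correction rather than blowing up to $d|B|$, and (b) a genuine derivative/Wronskian computation shows the constructed polynomial does not vanish identically — this is always the delicate heart of Stepanov's method, since the naive dimension count must be arranged so the number of free coefficients strictly exceeds the number of vanishing constraints, and one must verify that reduction modulo $X^p - X$ (needed to control degrees over $\FF_p$) does not collapse the construction. A secondary subtlety is handling the point $0 \in Z_d \cup \{0\}$ separately from the genuine $d$'th roots of unity, and tracking carefully whether $0 \in A$ or $0 \in B$, since these edge cases interact with the $|B \cap (-A)|$ term (note $0 \in B \cap (-A)$ iff $0 \in A$ and $0 \in B$).
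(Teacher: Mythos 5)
You have correctly identified the method (Stepanov's auxiliary polynomial), the right algebraic input (each $a+b$ is a root of $X^{d+1}-X$), and the correct source of the correction term: at a pair with $a+b=0$ one loses exactly one order of vanishing because $(a+b)^d=0\neq 1$. But the central construction is missing, and the fallback you propose in step (2) does not close the gap. A dimension count over the \emph{full} space of polynomials of degree at most $d+O(1)$, subject to the roughly $|A||B|$ linear conditions of vanishing to high order at the prescribed points, only produces a nonzero polynomial when $d+O(1)\geq |A||B|$ --- that is, precisely when the inequality you are trying to prove already holds. If $|A||B|>d+|B\cap(-A)|$ the naive count returns nothing, so the argument as outlined is circular. (Your first ansatz, involving $\prod_{b\in B}\bigl((X+b)^d-1\bigr)$, you rightly discard since its degree $d|B|$ is far too large.) The entire point of Stepanov's method in this problem is to replace the generic count by a structured one in which almost all of the vanishing conditions are satisfied automatically.

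The missing mechanism is the collapse of high powers to low powers at the points of $B$. Write $A=\{a_1,\ldots,a_M\}$ with $M=|A|\leq|B|$, set $D=d+M-1$, and restrict to the $M$-dimensional family $F(x)=-c+\sum_k c_k(x+a_k)^{D}$. For $b\in B$ with $a_k+b\in Z_d$ one has $(b+a_k)^{D}=(b+a_k)^{M-1}$, so \emph{all} of the conditions $F^{(j)}(b)=0$ for all $b\in B$ and $0\leq j\leq M-1$ reduce to the single structural requirement that $G(x)=\sum_k c_k(x+a_k)^{M-1}$ be a constant $c$. That is only $M-1$ homogeneous linear conditions on the $M$ unknowns $c_k$, so a nontrivial choice exists unconditionally; the Vandermonde structure forces $c\neq 0$, and the cancellation of the top $M-1$ binomial coefficients shows $\deg F=D-(M-1)=d$ exactly, so $F\not\equiv 0$ (here one also checks $D\leq p-1$, which disposes of your worry about reduction modulo $X^p-X$). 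Counting a root of order $M$ at each $b\in B\setminus(-A)$ and of order $M-1$ at each $b\in B\cap(-A)$ then gives $M|B|-|B\cap(-A)|\leq d$. Without this reduction of the vanishing conditions to the single constraint ``$G$ is constant,'' your outline cannot be completed.
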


When $d=(p-1)/2$ and $A+B \subseteq Z_\frac{p-1}{2}$, this theorem just fails to confirm the following well-known conjecture of S\'ark\"ozy, \cite{Sarkozy}.
\begin{Conjecture}[S\'ark\"ozy]
For any $p>3$, there is no non-trivial additive decomposition of the set $Z_\frac{p-1}{2}$.
\end{Conjecture}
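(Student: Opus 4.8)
The plan is to argue by contradiction, using Theorem~\ref{Main1} to collapse any hypothetical decomposition down to a rigid \qu{co-Sidon} configuration, and then to rule that configuration out using the multiplicative symmetry of the quadratic residues. Throughout, $Z_{(p-1)/2}$ is the set of nonzero quadratic residues, and a non-trivial decomposition means $Z_{(p-1)/2}=A+B$ with $2\leq|A|\leq|B|$.

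First I would carry out the reduction. Since $0$ is not a quadratic residue, $0\notin A+B$, so no pair $(a,b)\in A\times B$ satisfies $a+b=0$; equivalently $B\cap(-A)=\emptyset$. For $p>3$ the number $(p-1)/2$ is a proper divisor of $p-1$, so Theorem~\ref{Main1} applies with $d=(p-1)/2$ and yields $|A||B|\leq (p-1)/2$. On the other hand the addition map always gives $|A+B|\leq|A||B|$, and here $|A+B|=(p-1)/2$, so in fact $|A||B|=(p-1)/2=|A+B|$. Equality in $|A+B|\leq|A||B|$ forces the map $(a,b)\mapsto a+b$ to be a bijection from $A\times B$ onto $Z_{(p-1)/2}$: every quadratic residue has a \emph{unique} representation as $a+b$, so $(A,B)$ is a co-Sidon pair. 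This is exactly where Theorem~\ref{Main1} \qu{just fails} — it pins the decomposition to the co-Sidon case but does not contradict it.

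The decisive step is to exploit the structure that a genuine decomposition of the residues must respect. For any quadratic residue $t$, multiplication by $t$ permutes $Z_{(p-1)/2}$, so $tA+tB=t(A+B)=A+B$, and $(tA,tB)$ is again a co-Sidon decomposition of the same set. I would study the orbit and stabilizer of $(A,B)$ under this action of the group $Z_{(p-1)/2}$: because each residue has a unique representative pair, the permutation of $Z_{(p-1)/2}$ induced by each $t$ must be compatible with the product structure on $A\times B$, which severely constrains how $A$ and $B$ embed in $\FF_p$. The aim is to show that this much symmetry forces one of $A,B$ to be a dilate of a geometric progression or a union of multiplicative cosets, and that no such set can be half of a co-Sidon pair covering precisely the residues, contradicting $|A||B|=(p-1)/2$ and the uniqueness of representations.

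The main obstacle is this final step. Stepanov's method delivers only the non-strict bound $|A||B|\leq(p-1)/2$, and the co-Sidon configuration saturates it, so no contradiction is available from the polynomial count alone. Converting the multiplicative symmetry into a genuine obstruction — either by upgrading Theorem~\ref{Main1} to a strict inequality under non-triviality, or by establishing an independent rigidity for co-Sidon pairs invariant under the residue action — is the heart of the matter and is precisely the gap that leaves S\'ark\"ozy's conjecture open. I expect that closing it requires a finer auxiliary polynomial that builds the symmetry into the construction, or an algebraic classification of symmetric co-Sidon sets, rather than any soft counting argument.
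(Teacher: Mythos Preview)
There is no proof in the paper to compare against: S\'ark\"ozy's conjecture is stated in the paper as an open conjecture, not as a theorem, and the authors explicitly say that Theorem~\ref{Main1} \qu{just fails to confirm} it. Your first paragraph is correct and reproduces exactly what the paper establishes as Corollary~\ref{UniqueSums}: from $A+B=Z_{(p-1)/2}$ one gets $B\cap(-A)=\emptyset$, Theorem~\ref{Main1} then gives $|A||B|\leq (p-1)/2$, and the trivial bound $|A+B|\leq|A||B|$ forces $|A||B|=(p-1)/2$ with all sums distinct. That is the full extent of the paper's contribution toward the conjecture.

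Your \qu{decisive step} is not a proof but a sketch of a hoped-for argument, and you acknowledge this yourself in the final paragraph. The multiplicative symmetry you invoke is genuine --- $(tA,tB)$ is indeed another co-Sidon decomposition for each residue $t$ --- but the claim that this \qu{forces one of $A,B$ to be a dilate of a geometric progression or a union of multiplicative cosets} is asserted without justification, and the subsequent claim that no such set can participate in a co-Sidon decomposition is likewise unsupported. The action of $Z_{(p-1)/2}$ permutes the set of co-Sidon decompositions, not the pair $(A,B)$ itself, so there is no stabilizer constraint on $A$ or $B$ individually unless you can first control the orbit, which you have not done. In short, you have correctly located the gap and correctly diagnosed why Stepanov's method alone cannot close it, but you have not closed it either; the conjecture remains open exactly as the paper leaves it.
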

Further reading on this conjecture can be found in \cite{Sarkozy}, \cite{Shkredov2}, \cite{Shkredov3}, and \cite{Shparlinski}. With our theorem, we have made the following contribution to S\'ark\"ozy's conjecture. 
\begin{Corollary}\label{UniqueSums}
Let $p$ be a prime and suppose $A,B\subseteq\FF_p$ satisfy $A+B=Z_d$ for some $d$ properly dividing $p-1$. Then 
\[|A||B|=d\]
and all sums $a+b$ are distinct. In particular, if $d$ is prime and neither $A$ nor $B$ is a singleton, no such decomposition is possible.
\end{Corollary}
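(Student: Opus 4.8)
The plan is to derive Corollary \ref{UniqueSums} directly from Theorem \ref{Main1} together with the trivial inequality $|A+B|\le |A||B|$. First I would note that $0$ is not a $d$-th root of unity, so the hypothesis $A+B=Z_d$ in particular forces $0\notin A+B$. Unwinding this, there is no pair $a\in A$, $b\in B$ with $a+b=0$, i.e. with $b=-a$; hence $B\cap(-A)=\emptyset$. Since $A+B=Z_d\subseteq Z_d\cup\{0\}$, Theorem \ref{Main1} applies and gives $|A||B|\le d+|B\cap(-A)|=d$.

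Next I would run the easy bound in the other direction: $d=|Z_d|=|A+B|\le |A||B|$. Combining the two displays yields $|A||B|=d$, and simultaneously $|A+B|=|A||B|$. This last equality is precisely the assertion that the sums $a+b$ with $(a,b)\in A\times B$ are pairwise distinct: if two such sums coincided, the natural surjection $A\times B\to A+B$ would fail to be injective and we would have $|A+B|<|A||B|$, a contradiction.

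For the final sentence, suppose $d$ is prime. Then $|A||B|=d$ forces $\{|A|,|B|\}=\{1,d\}$, so at least one of $A$, $B$ is a singleton; contrapositively, if neither $A$ nor $B$ is a singleton, no decomposition $A+B=Z_d$ can exist. I do not anticipate a real obstacle here, since all the substance is contained in Theorem \ref{Main1}; the only step demanding a moment's attention is the observation that $0\notin Z_d$ makes $B\cap(-A)$ empty, which is exactly what is needed to remove the error term and pin $|A||B|$ down to $d$ on the nose.
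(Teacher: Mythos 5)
Your proposal is correct and is exactly the intended derivation: the paper states Corollary \ref{UniqueSums} as an immediate consequence of Theorem \ref{Main1}, using precisely the observations that $0\notin Z_d$ forces $B\cap(-A)=\emptyset$ and that $d=|A+B|\leq|A||B|$ gives the matching lower bound. Nothing is missing.
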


After the first draft of this article was made available, George Shakan observed that by combining previous work on S\'ark\"ozy's conjecture and Ford's work on divisors, Theorem \ref{Main1} implies that S\'ark\"ozy's conjecture must in fact hold for almost all primes. Below, $\pi(x)$ denotes the number of primes $p\leq x$.

\begin{Corollary}[Shakan]
As $x$ tends to infinity, the number of primes $p\leq x$ such that there exist non-singleton sets $A,B\subseteq\FF_p$ with $A+B=Z_{\frac{p-1}{2}}$ is $o(\pi(x))$.
\end{Corollary}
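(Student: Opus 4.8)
The plan is to combine three ingredients: the exact identity $|A||B|=(p-1)/2$ supplied by Corollary \ref{UniqueSums}; the known lower bounds on $\min(|A|,|B|)$ from the literature on S\'ark\"ozy's conjecture; and Ford's theorem on the distribution of integers with a divisor in a prescribed short interval, in its form for shifted primes. Call a prime $p>3$ \emph{bad} if there are non-singleton sets $A,B\subseteq\FF_p$ with $A+B=Z_{(p-1)/2}$. Since $(p-1)/2$ is a proper divisor of $p-1$, Corollary \ref{UniqueSums} tells us that for a bad prime $p$ one has $|A|\cdot|B|=(p-1)/2$; writing $n=(p-1)/2$ and $m=\min(|A|,|B|)$, the integer $n$ therefore has a divisor $m\le\sqrt n$.

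The second step is to use previous work on S\'ark\"ozy's conjecture (\cite{Sarkozy}, \cite{Shkredov2}, \cite{Shkredov3}, \cite{Shparlinski}) to localise $m$ in a very short window just below $\sqrt n$. What is needed is a bound of the shape $\min(|A|,|B|)\ge\sqrt p/\psi(p)$ with $\psi(p)=p^{o(1)}$---for instance a fixed power of $\log p$---valid for every non-trivial decomposition; equivalently $\max(|A|,|B|)\le\sqrt p\,\psi(p)$. Combined with $|A||B|=(p-1)/2$, this forces both sizes, and in particular $m$, into an interval $I_p\subseteq[\sqrt n/\psi'(p),\,\sqrt n]$ with $\psi'(p)=p^{o(1)}$; that is, $n=(p-1)/2$ has a divisor within multiplicative distance $p^{o(1)}$ of $\sqrt n$.

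The third and final step is to bound the number of primes $p\le x$ for which $(p-1)/2$ has a divisor in an interval $(y,z]$ with $z/y=y^{o(1)}$ and $y,z\asymp\sqrt x$, showing it is $o(\pi(x))$; applying this with the interval $I_p$ then completes the proof. Here I would invoke Ford's work on divisors in short intervals, transplanted to the shifted primes $p-1$: the heart of the matter is Ford's principle that the divisors of a typical integer near a fixed point cluster together, so that possessing \emph{one} divisor in a short interval forces the existence of \emph{many}, which improves on the trivial first-moment count; combining this clustering phenomenon with the Bombieri--Vinogradov theorem (to control primes $p\equiv1\pmod d$ for $d$ up to about $\sqrt x$) yields the required shifted-prime analogue.

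The main difficulty is dovetailing these two inputs quantitatively. On the divisor side, Ford's estimates save only a power of $\log x$ over the total, so the naive inequality ``$o(x)$ integers have such a divisor'' does not give $o(\pi(x))$ and one genuinely needs the shifted-prime version; moreover one must ensure that the window $I_p$ really has multiplicative length $p^{o(1)}$, since a window of fixed polynomial width such as $[p^{1/3},p^{2/3}]$ would be worthless: a positive proportion of shifted primes already have a divisor there. Correspondingly, the input demanded of the S\'ark\"ozy literature is an essentially optimal lower bound $\min(|A|,|B|)\ge p^{1/2-o(1)}$; a bound with any worse exponent fails to localise $m$ tightly enough. Verifying that the known estimates are (or can be sharpened to be) this strong is the delicate point; the remaining work---the sieve bookkeeping and the passage from Ford-type divisor counts to the count of bad primes---is routine.
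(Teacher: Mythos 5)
Your proposal follows essentially the same route as the paper: Corollary \ref{UniqueSums} gives $|A||B|=(p-1)/2$, known size bounds on the summands localise $\min(|A|,|B|)$ in a short multiplicative window near $\sqrt p$, and Ford's divisor estimates for shifted primes finish the count. The two inputs you flag as delicate are both available off the shelf: Shkredov \cite{Shkredov2} proves $(\tfrac{1}{6}-o(1))\sqrt p\leq |A|,|B|\leq (3+o(1))\sqrt p$, a window of \emph{bounded} multiplicative width (stronger than the $p^{o(1)}$-width you ask for), and Ford's Theorem 6 in \cite{Ford} already supplies the shifted-prime bound $P(x,y,z)\ll H(x,y,z)/\log x$, so no separate Bombieri--Vinogradov argument is needed; combined with his Theorem 1(v), which gives $H(x,\sqrt x/100,\sqrt x)=o(x)$, this yields $o(\pi(x))$ bad primes exactly as you outline.
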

\begin{proof}
Call the primes in question bad primes. It will suffice to show that for $x$ sufficiently large, the number of bad primes $p$ with $x/e\leq p\leq x$ is $o(x/\log x)$. Indeed, if this is the case, then there are at most
\[\sum_{k=1}^{\log x}o(e^k/k)=o\lr{\int_1^x \frac{dt}{\log t}}=o\lr{\pi(x)}\]
bad primes up to $x$, by the Prime Number Theorem.
So, let $p$ be a bad prime in the interval $[x/e,x]$ and suppose $A+B=Z_{\frac{p-1}{2}}$ for subsets $A,B\subset\FF_p$ with $|A|=M>1$ and $|B|=N>1$. By Corollary \ref{UniqueSums}, \[p-1=2MN,\] and moreover Shkredov has proved that
\[\lr{\frac{1}{6}-o(1)}\sqrt{p}\leq M,N\leq (3+o(1))\sqrt p,\]
see \cite[Corollary 2.6]{Shkredov2}. It turns out that primes of such a form are rare. To see this we proceed as follows. For $0 < y < z$, let we let $\tau(n; y, z)$ denote the number of divisors $d$ of $n$ which satisfy $y<d\leq z$, and for $x\geq 1$ we write \[H(x,y,z)=|\{n: 1\leq n\leq x,\ \tau(n;y,z)\geq 1\}|,\]
and
\[P(x,y,z)=|\{p: 1\leq p\leq x,\ p\text{ prime},\ \tau(p-1;y,z)\geq 1\}|.\]
Assuming (as we may) that $M\leq N$, then $M$ is a divisor $d$ of $p-1$ with 
\[\frac{1}{100}\sqrt x\leq d\leq \sqrt x\]
provided $p$ is sufficiently large. So with $z=\sqrt x$ and $y=\sqrt x/100$ we have \[P(x,y,z)\ll \frac{H(x,y,z)}{\log x}\] by Theorem 6 of \cite{Ford}. Next by Theorem 1 part (v) of \cite{Ford}, with $u=\log(100)/\log(y)$, we have $H(x,y,z)=o(x)$ (in a stronger quantitative sense, in fact), and the corollary is proved.
\end{proof}

A related question is the estimation of the clique number $\omega(G_p)$ of the Paley graph $G_p$. Recall that when $p=1\mod 4$, the Paley graph $G_p$ is the Cayley graph on the additive group $\FF_p$ generated by the quadratic residues. The best known bound follows from Vinogradov's estimate, and \cite{MaistrelliPenman} provides the slight improvement for certain primes $\omega(G_p)\leq \sqrt{p-4}$. In a similar vein to the above considerations, when $A+B = Z_d \cup \{0\}$ for some $d$ properly dividing $p-1$, all non-zero sums $a+b$ must be distinct.  In the case $A=-B$, the theorem implies the following bound on the clique number of Paley graphs.

\begin{Corollary}
Let $p$ be a prime, $d$ properly dividing $p-1$ and suppose $A\subseteq\FF_p$ is such that $A-A\subseteq Z_d\cup\{0\}$. Then $|A|(|A|-1)\leq d$. In particular, for $p = 1 \mod 4$, we have \[\omega(G_p)\leq \frac{\sqrt{2p-1}+1}{2}.\]
\end{Corollary}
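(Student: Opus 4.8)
The plan is to deduce this from Theorem \ref{Main1} by taking $B = -A$. First I would note that $A + B = A + (-A) = A - A$, so the assumption $A - A \subseteq Z_d \cup \{0\}$ is precisely the hypothesis $A + B \subseteq Z_d \cup \{0\}$ needed to invoke Theorem \ref{Main1}; here the presence of $0$ on the right-hand side is essential, since $0 = a - a \in A - A$ for any $a \in A$. Theorem \ref{Main1} then gives $|A||B| \leq d + |B \cap (-A)|$.

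Next I would evaluate the two terms on the right in terms of $A$ alone. Since $x \mapsto -x$ is a bijection of $\FF_p$, we have $|B| = |-A| = |A|$, and furthermore $B \cap (-A) = (-A) \cap (-A) = -A$, so that $|B \cap (-A)| = |A|$. Substituting into the inequality yields $|A|^2 \leq d + |A|$, that is, $|A|(|A| - 1) \leq d$, which is the first claim.

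For the clique bound, suppose $p \equiv 1 \mod 4$ and let $A \subseteq \FF_p$ be a clique in $G_p$, so every difference of two distinct elements of $A$ is a quadratic residue; equivalently $A - A \subseteq Z_{(p-1)/2} \cup \{0\}$. Since $(p-1)/2$ properly divides $p-1$, the first part applies with $d = (p-1)/2$ and gives $|A|(|A| - 1) \leq (p-1)/2$. Viewing this as a quadratic inequality, $|A|^2 - |A| - (p-1)/2 \leq 0$, so $|A| \leq \tfrac{1}{2}\bigl(1 + \sqrt{1 + 2(p-1)}\bigr) = \tfrac{1}{2}\bigl(\sqrt{2p-1} + 1\bigr)$, as desired.

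I expect no real obstacle here: the statement is a direct specialization of Theorem \ref{Main1}. The only points needing a little care are the bookkeeping — correctly identifying $B \cap (-A)$ as all of $-A$, so that the error term in Theorem \ref{Main1} is exactly $|A|$ and the bound tightens to $|A|(|A|-1) \leq d$ — and the routine step of solving the quadratic inequality to extract the stated form of the bound on $\omega(G_p)$.
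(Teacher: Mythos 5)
Your proposal is correct and is exactly the paper's intended argument: the paper derives this corollary from Theorem \ref{Main1} by setting $B=-A$, so that $|B\cap(-A)|=|A|$ and the bound $|A|^2\leq d+|A|$ follows, with the clique estimate obtained by solving the quadratic for $d=(p-1)/2$. No issues.
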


Paley graphs are also defined over a field with $q$ elements for all prime powers $q$ that are congruent to $1$ modulo $4$. It is noted in \cite{MaistrelliPenman} that when $q$ is the square of a prime, the clique number equals $\sqrt{q}$ (this was proven in \cite{BDR}). The method developed to prove Theorem~\ref{Main1} only works in prime fields. 

A third corollary is a variant of S\'ark\"ozy's Conjecture which concerns decompositions of the form
\[Z_d=A\dotdiv A=\{a-a':a,a'\in A, a\neq a'\}.\] This type of decomposition was considered in \cite{LevSonn} with the extra property that every element of $Z_d$ has a unique representation in $A-A$. It was noted there that this is possible for only a thin set of primes. As a direct corollary from Theorem~\ref{Main1}, this property is necessitated by the very existence of the decomposition.
\begin{Corollary}
Let $p$ be a prime, $d$ be even and properly divide $p-1$ and suppose $A\subseteq\FF_p$ is such that $A\dotdiv A =  Z_d$. Then each difference $a-a'$ with $a\neq a'$ is unique. Consequently, when $d= \frac{p-1}{2}$ then the prime $p$ is of the form $\frac{n^2+1}{2}$ for some odd integer $n$, and hence there is no decomposition $A\dotdiv A =  Z_d$ for almost every prime.
\end{Corollary}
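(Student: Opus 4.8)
The plan is to deduce this directly from Theorem~\ref{Main1} applied with $B=-A$, combined with a trivial counting bound in the opposite direction.

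First I would set $B=-A$ and record that
\[A+B=A-A=\{a-a':a,a'\in A\}=(A\dotdiv A)\cup\{0\}=Z_d\cup\{0\},\]
so the hypotheses of Theorem~\ref{Main1} are satisfied; note that the assumption $A\dotdiv A=Z_d$ already forces $Z_d$ to be symmetric, hence $-1\in Z_d$, hence $d$ is even, so the parity hypothesis is in fact automatic and is stated only for emphasis. Theorem~\ref{Main1} gives $|A||B|\leq d+|B\cap(-A)|$, and since $B=-A$ we have $|B|=|A|$ and $B\cap(-A)=-A$, so $|B\cap(-A)|=|A|$. The inequality therefore reads $|A|^2\leq d+|A|$, that is, $|A|(|A|-1)\leq d$.

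Next I would supply the reverse inequality: the map sending an ordered pair $(a,a')\in A\times A$ with $a\neq a'$ to the difference $a-a'$ is a surjection onto $A\dotdiv A=Z_d$, whence $|A|(|A|-1)\geq|Z_d|=d$. Combining the two bounds yields $|A|(|A|-1)=d$, and a surjection between finite sets of equal cardinality is a bijection; equivalently, every element of $Z_d$ has exactly one representation as a difference $a-a'$ with $a\neq a'$, which is the uniqueness assertion. The only point here needing a moment's care is that the term $|B\cap(-A)|$ in Theorem~\ref{Main1} is $|A|$ rather than $0$, so the theorem alone only gives $|A|(|A|-1)\leq d$; the elementary surjectivity bound is what upgrades this to equality — and this is also the closest thing to an obstacle, since the rest is routine.

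Finally, for the case $d=\tfrac{p-1}{2}$ I would just carry out the arithmetic: with $n=2|A|-1$ (odd, since $|A|\geq 2$), the relation $|A|(|A|-1)=\tfrac{p-1}{2}$ becomes $n^2=4|A|(|A|-1)+1=2(p-1)+1=2p-1$, so $p=\tfrac{n^2+1}{2}$. To conclude that this rules out almost every prime, I would note that the number of primes $p\leq x$ of this shape is bounded by the number of integers $n$ with $\tfrac{n^2+1}{2}\leq x$, i.e. by $O(\sqrt x)$, which is $o(\pi(x))$ by the Prime Number Theorem; thus the primes admitting such a decomposition have relative density zero. This recovers, as a consequence of the mere existence of the decomposition, the thinness of the relevant set of primes observed in \cite{LevSonn}.
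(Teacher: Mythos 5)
Your proposal is correct and follows exactly the route the paper intends: the paper states this as a direct consequence of Theorem~\ref{Main1}, and the specialization $B=-A$ giving $|A|(|A|-1)\leq d$ is precisely the computation the authors carry out for the preceding Paley-graph corollary, with the elementary counting bound $|A|(|A|-1)\geq |Z_d|=d$ forcing equality and hence uniqueness of differences. The arithmetic $n=2|A|-1$, $n^2=2p-1$, and the $O(\sqrt{x})=o(\pi(x))$ density argument are all as expected.
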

\section*{Acknowledgements}
The content and exposition in this article has benefited from insightful discussions with Seva Lev, Neil Lyall, \'Akos Magyar, David Masser, Maksym Radziwi\l\l, Tom Sanders, George Shakan, and Ilya Shkredov.

\section{Proof of Theorem 1.2}

\begin{proof}
 Suppose $|A| \leq |B|$, enumerate $A$ as $A=\{a_1,\ldots,a_M\}$, and let $r = |B \cap (-A)|$. When $M=1$ the result is immediate. When $M>1$ we make use of Stepanov's method of auxiliary polynomials. Let $c_1,\ldots,c_M$ be not all zero and subject to the constraint that the polynomial
\[G(x)=\sum_{k=1}^M c_k(x+a_k)^{M-1}=\sum_{j=0}^{M-1}x^{j}\binom{M-1}{j}\sum_{k=1}^Mc_ka_k^{M-1-j}.\]
is a constant. This is possible since the system in question is of Vandermonde type, there are $M-1$ coefficients of non-constant powers of $x$ in $G$ that must be made zero, and $M$ variables at our disposal. We may choose $c_1, \dots, c_M$ so that the resulting constant equals 1:
\[1=\sum_{k=1}^Mc_ka_k^{M-1}.\] Since $G(x)$ is a constant, $G^{(j)}(b)=0$ for any $b$ and $j>0$. 

Now let 
\[F(x)=-1+\sum_{k=1}^M c_k(x+a_k)^{D}\]
where the values of $c_k$ are as above and $D=d+M-1$. Observe that in view of the fact
\[M=|A|\leq |A+B| \leq \frac{p-1}{2}+1,\] we have \[D\leq \frac{p-1}{2}+M-1\leq p-1.\] By the Binomial Theorem, the $M$ leading coefficients of $F$ are of the form
\[\binom{D}{l}\sum_{k}c_ka_k^{l},\]
which vanish for $l<M-1$ by our choice of $c_k$ but not for $l=M-1$. Hence $F$ has degree $D-(M-1)=d$ and is, in particular, non-zero.

Next, for $b\in B \cap (-A)$ and all $1\leq k \leq M$, $(b+a_k)^{d+1}=(b+a_k)$ so
\[F(b)=-1+\sum_{k=1}^Mc_k(b+a_k)^{d+1}(b+a_k)^{M-2}= -1+\sum_{k=1}^M c_k(b+a_k)^{M-1}= -1+G(b)=0.\]
Similarly, if we differentiate $F$ a total of $j$ times, where $1\leq j\leq M-2$, we get
\[F^{(j)}(x)=(D)\cdots(D-j+1)\sum_{k=1}^Mc_k(x+a_k)^{D-j}\]
whence 
\[F^{(j)}(b)=\frac{(D)\cdots(D-j+1)}{(M-1)\cdots(M-j)}G^{(j)}(b)=0.\]
In this way $F$ has a root of order $M-1$ at each $b\in B \cap (-A)$.

If $b \in B \setminus (-A)$, $(b+a_k)^{d}=1$ for all $1\leq k \leq M$. The above argument gives that $F$ has a root of order $M$ at each $b\in B \setminus (-A)$. Therefore
\[(M-1) r + M (|B|-r)\leq \deg F=d\]
yielding the theorem.
\end{proof}

\phantomsection

\end{document}